\documentclass[11pt,a4paper]{article}
\usepackage{tikz,amstext,amsbsy,amsfonts,amsmath,amsthm}
\usetikzlibrary{intersections,through,calc}
\usepackage{amsfonts,amsmath}

\textwidth=6.0in \textheight=8.5in \evensidemargin=0in \oddsidemargin=0in \topmargin=0in \topskip=0pt \baselineskip=12pt

\newcommand{\fhi}{\varphi}

\newcommand{\norm}[1]{\|#1\|}
\newcommand{\enorm}[1]{\norm{#1}_2}
\newcommand{\ipr}[2]{\left\langle #1, #2 \right\rangle}

\newcommand{\numbersystem}[1]{\mathbb{#1}}

\newcommand{\R}{\numbersystem{R}}

\newcommand{\abs}[1]{\lvert#1\rvert}

\newcommand{\card}[1]{\lvert#1\rvert}

\newtheorem{theorem}{Theorem}

\newtheorem{corollary}[theorem]{Corollary}

\newtheorem{lemma}[theorem]{Lemma}

\newtheorem{proposition}[theorem]{Proposition}

\def\lab(#1)#2{\put(#1){\makebox(0,0)[c]{#2}}}

\begin{document}
\title{The Gilbert Arborescence Problem}
\author{M.~G.~Volz\thanks{TSG Consulting, 350 Collins Street Melbourne,
Victoria 3000, Australia.}\and M.~Brazil\thanks{Department of Electrical and Electronic Engineering, The University of Melbourne, Victoria 3010,
Australia.} \and C.~J.~Ras\footnotemark[2] \and K.~J.~Swanepoel\thanks{Department of Mathematics, London School of Economics and Political
Science, WC2A 2AE London, England.} \and  D.~A.~Thomas\thanks{Department of Mechanical Engineering, University of Melbourne, Victoria 3010,
Australia.}}

\date{}
\maketitle
\begin{abstract}
We investigate the problem of designing a minimum cost flow network interconnecting $n$ sources and a single sink, each with known locations in
a normed space and with associated flow demands. The network may contain any finite number of additional unprescribed nodes from the space;
these are known as the Steiner points. For concave increasing cost functions, a minimum cost network of this sort has a tree topology, and hence
can be called a Minimum Gilbert Arborescence (MGA). We characterise the local topological structure of Steiner points in MGAs, showing, in
particular, that for a wide range of metrics, and for some typical real-world cost-functions, the degree of each Steiner point is $3$.\\

\noindent \textbf{Keywords: }{Gilbert network; minimum cost network; network flows; Steiner tree}
\end{abstract}

\section{Introduction}
The \emph{Steiner Minimum Tree (SMT) problem} asks for a shortest network spanning a given set of nodes (\emph{terminals}) in a given metric
space. It differs from the minimum spanning tree problem in that additional nodes, referred to as \emph{Steiner points}, can be included to
create a spanning network that is shorter than would otherwise be possible. In this paper we consider the geometric version of this problem,
where the metric space is  a normed vector space, and the Steiner points can be any points in that space (as opposed to the network version of
the SMT problem where the Steiner points are restricted to being vertices of a given network). This geometric version of the SMT problem is a
fundamental problem in physical network design optimisation, and has numerous applications, including the design of telecommunications or
transport networks for the problem in the Euclidean plane (the $l_2$ metric), and the physical design of microchips for the problem in the
rectilinear plane (the $l_1$ metric)~\cite{hwang-1992}.

Gilbert~\cite{gilbert-1967} proposed a generalisation of the SMT problem whereby symmetric non-negative \emph{flows} are assigned between each
pair of terminals. The aim is to find a least cost network interconnecting the terminals, where each edge has an associated total flow such that
the flow conditions between terminals are satisfied, and Steiner points satisfy Kirchhoff's rule (ie, the net incoming and outgoing flows demanded from
each Steiner point are equal). The cost of an edge is its length multiplied by a non-negative \emph{weight}. The weight is determined by a given
function of the total flow being routed through that edge, where the weight function satisfies conditions such as being non-negative,
non-decreasing, triangular and concave. These conditions will be made explicit in Section~2.2. The \emph{Gilbert network problem} (GNP) asks for
a minimum-cost network spanning a given set of terminals with given flow demands and a given weight function.

A variation on this problem that we will show to be a special case of the GNP occurs when the  terminals consist of $n$ sources and a unique
sink, and all flows not between a source and the sink are zero. This problem is of intrinsic interest as a natural restriction of the GNP; it is
also of interest for its many applications to areas such as drainage networks~\cite{lee-1976}, gas pipelines~\cite{bhaskaran-1979}, and
underground mining networks~\cite{brazil-2000}.

If the weight function is concave and increasing, the resulting minimum network has a tree topology, and provides a directed path from each
source to the sink. Such a network can be called an \emph{arborescence}, and we refer to this special case of the GNP as the \emph{Gilbert
arborescence problem} (GAP). Traditionally, the term `arborescence' has been used to describe a rooted tree providing directed paths from the
unique root (source) to a given set of sinks. Here we are interested in the case where the flow directions are reversed, i.e. flow is from $n$
sources to a unique sink. It is clear, however, that the resulting weights for the two problems are equivalent, hence we will continue to use
the term `arborescence' for the latter case. Moreover, if we take the sum of these two cases, and rescale the flows (dividing flows in each
direction by $2$), then again the weights for the total flow on each edge are the same as in the previous two cases, and the flows are
symmetric. This justifies our claim that the GAP can be treated as a special case of the GNP. It will be convenient, however, for the remainder
of this paper to think of arborescences as networks with a unique sink.

A \emph{minimum Gilbert arborescence} (MGA) is a (global) minimum-cost arborescence for a given set of terminals and flow demands, and a given cost
function. All flows in the network are directed towards the unique sink. In this paper we investigate the local topological structure of Steiner
points in MGAs, over smooth norms and some typical cost-functions. The analysis of the local structure of vertices of Steiner trees in spaces with non-smooth norms is much more difficult. Even in the classical Steiner tree problem in non-smooth norms, even though there is a general (abstract) characterisation of the local structure \cite{swanepoel-2007}, it is not easy to use this characterisation in specific instances, and has only been done in a few special cases. Although the abstract characterisation for non-smooth norms has been generalised to MGAs in the thesis of Marcus Volz \cite{thesis}, we cannot at present give any concrete application to a specific norm. However, considering only the smooth case in this paper is not a significant restriction, since any non-smooth norm can be approximated to within any required degree of accuracy by a suitable smoothing.

In the optimal design of underground mining tunnel-systems the weight function is usually linear, and the norm is non-Euclidean since there is a constraint on the gradient of the edges \cite{alford}. Although there have recently been many significant developments in the optimal design of gradient-constrained mining networks, a generalisation which includes flow (in this case the flow is the rate of the mass of ore transported along the link) is in need of further mathematical advancement. This underdevelopment of geometric flow-dependent Steiner network algorithms is not due to a lack of important applications, and therefore probably has more to do with the difficulty of the problem.

Finding constraints on the topological structure of MGAs is an essential step towards the goal of producing exact algorithms. There exists a simple generic (and intuitive) algorithmic-framework for the exact construction of many versions of geometric Steiner networks, including MGAs. This framework, which is exemplified by the highly efficient GeoSteiner package for the classical Steiner tree problem, proceeds by constructing every \textit{feasible} topology spanning the terminals and Steiner points, and then finding the optimal locations of the Steiner points with respect to each topology. As evidenced by GeoSteiner, strong local constraints on the set of feasible topologies can significantly reduce the average running time (GeoSteiner runs efficiently on instances of thousands of terminals).

In Section~2 we specify the nature of the weight function that we consider in this paper, and formally define minimum Gilbert networks and
Gilbert arborescences in Minkowski spaces (which generalise Euclidean spaces). In Section~3 we give a general topological characterisation of
Steiner points in such networks, for smooth Minkowski spaces. We then apply this characterisation, in Section~4, to the smooth Minkowski plane
with a linear weight function to show that in this case all Steiner points have degree $3$. In Section~5 we derive a similar result in higher
dimensional Euclidean spaces for a slightly more general class of weight functions.

\section{Preliminaries}

\subsection{Minkowski spaces and Steiner trees}

The cost functions for the networks we consider in this paper make use of more general norms than simply the Euclidean norm. Hence, we introduce
a generalisation of Euclidean spaces, namely finite-dimensional normed spaces or Minkowski spaces. See \cite{Thompson} for an introduction to
Minkowski geometry.

A \emph{Minkowski space} (or \emph{finite-dimensional Banach space}) is $\R^n$ endowed with a \emph{norm} $\norm{\cdot}$, which is a function
$\norm{\cdot}:\R^n\to\R$ that satisfies
\begin{itemize}
\item $\norm{x}\geq 0$ for all $x\in \R^n$, $\norm{x}=0$ only if $x=0$, \item $\norm{\alpha x}=\abs{\alpha}\norm{x}$ for all $\alpha\in\R$ and
$x\in \R^n$, and \item $\norm{x+y}\leq\norm{x}+\norm{y}$.
\end{itemize}

We use $\norm{\cdot}_2$ to denote the Euclidean ($l_2$) norm.

We now discuss some aspects of the SMT problem, since this is a special case of the GNP, where all flow demands are zero (which is equivalent to the
weights on the edges being positive constants). Our terminology for the SMT problem is based on that used in~\cite{hwang-1992}. Let $T$ be a network
interconnecting a set $N = \{p_1,\ldots,p_n\}$ of points, called \emph{terminals}, in a Minkowski space. Vertices of $T$ which are not terminals
are called \emph{Steiner points}, and can consist of any points from the space. Let $G(T)$ denote the \emph{topology} of $T$, i.e. $G(T)$
represents the graph structure of $T$ but not the embedding of the Steiner points. Then $G(T)$ for a shortest network $T$ is necessarily a tree,
since if a cycle exists, the length of $T$ can be reduced by deleting an edge in the cycle. A network with a tree topology is called a
\emph{tree}, its links are called \emph{edges}, and its nodes are called \emph{vertices}. An edge connecting two vertices $a,b$ in $T$ is
denoted by $ab$, and its  length by $\norm{a-b}$.

The \emph{splitting} of a vertex is the operation of disconnecting two edges $av,bv$ from a vertex $v$ and connecting $a,b,v$ to a newly created
Steiner point. 
Furthermore, though the positions of terminals are fixed, Steiner points can be subjected to arbitrarily small movements provided the resulting
network is still connected. Such movements are called \emph{perturbations}, and are useful for examining whether the length of a network is
minimal.

A \emph{Steiner tree} (ST) is a tree whose length cannot be shortened by a small perturbation of its Steiner points, even when splitting is
allowed. By convexity, an ST is a minimum-length tree for its given topology. A \emph{Steiner minimum tree} (SMT) is a shortest tree among all
STs, over all topologies and all possible positions of Steiner points in the space. For many Minkowski spaces bounds are known for the maximum
possible degree of a Steiner point in an ST, giving useful restrictions on the possible topology of an SMT. For example, in Euclidean space of
any dimension every Steiner point in an ST has degree three. Given a set $N$ of terminals, the \emph{Steiner problem} (or \emph{Steiner Minimum
Tree problem}) asks for an SMT spanning $N$.

\subsection{Gilbert flows}

Gilbert~\cite{gilbert-1967} proposed the following generalisation of the Steiner problem in Euclidean space, which we now extend to Minkowski
space. Let $T$ be a network interconnecting a set $N = \{p_1,\ldots,p_n\}$ of $n$ terminals in a Minkowski space. For each pair $p_i,p_j,\;i\neq
j$ of terminals, a non-negative flow demand $t_{ij} = t_{ji}$ is given. The cost of an edge $e$ in $T$ is $w(t_e)l_e$, where $l_e$ is the length
of $e$, $t_e$ is the total flow being routed through $e$, and $w(\cdot)$ is a unit cost \emph{weight function} defined on $[0,\infty)$
satisfying

\begin{align}
  w(0) &\geq 0\quad\text{and}\quad w(t) > 0  \text{ for all }t>0, \label{eq:cost-ftn-non-neg} \\
  w(t_2) &\geq w(t_1)\quad \text{for all}\quad t_2 > t_1\geq 0, \label{eq:cost-ftn-non-dec} \\
   w(\cdot)&\text{ is a concave function.} \label{eq:cost-ftn-concave}
\end{align}

That the function $w$ is concave means by definition that $-w$ is convex. Conditions ~\eqref{eq:cost-ftn-non-neg} and ~\eqref{eq:cost-ftn-concave} imply the following linearity condition

\begin{align}
  w(t_1+t_2) &\leq w(t_1)+w(t_2)\quad\text{for all}\quad t_1,t_2 > 0. \label{eq:cost-ftn-trnglr}
\end{align}

A network satisfying Conditions~\eqref{eq:cost-ftn-non-neg}, \eqref{eq:cost-ftn-non-dec}, and \eqref{eq:cost-ftn-trnglr} (but not necessarily Condition \eqref{eq:cost-ftn-concave}) is called a \emph{Gilbert network}. For a given edge $e$ in $T$, $w(t_e)$ is
called the \emph{weight} of $e$, and is also denoted simply by $w_e$. The \emph{total cost} of a Gilbert network $T$ is the sum of all edge
costs, i.e.
\begin{eqnarray*}
  C(T) &=& \sum_{e\in E}w(t_e)l_e
\end{eqnarray*}
\noindent where $E$ is the set of all edges in $T$. A Gilbert network $T$ is a \emph{minimum Gilbert network} (MGN), if $T$ has the minimum cost
of all Gilbert networks spanning the same point set $N$, with the same flow demands $t_{ij}$ and the same cost function $w(\cdot)$. By the
arguments of~\cite{cox-1998}, an MGN always exists in a Minkowski space when Conditions~\eqref{eq:cost-ftn-non-neg}, \eqref{eq:cost-ftn-non-dec}, and \eqref{eq:cost-ftn-trnglr}
are assumed for the weight function.

Conditions~\eqref{eq:cost-ftn-non-neg}, \eqref{eq:cost-ftn-non-dec}, and \eqref{eq:cost-ftn-trnglr} ensure that the weight function is non-negative, non-decreasing and
triangular, respectively. These are natural conditions for most applications. Unfortunately, these conditions alone do not guarantee
that a minimum Gilbert network is a tree. To show this, we now give an example of a Gilbert network problem with two sources and one sink in the
Euclidean plane, where there exists a \textit{split-route flow} (i.e. some vertex has at least two out-going edges and therefore the network contains a cycle) that has a lower cost than any arborescence.

For this example there are two sources $p_1, p_2$ and a sink $q$ which are the vertices of a triangle $\triangle p_1p_2q$ with edge lengths
$\enorm{p_1-p_2}=1$ and $\enorm{p_1-q}=\enorm{p_2-q}=10$, as illustrated in Figure~\ref{figureEx1}.
\begin{figure}
\begin{center}
\begin{tikzpicture}[scale=1.1, line join=round]

\coordinate (o) at (0,0);
\coordinate [label=left:{$p_1$}] (p1) at (0,0.5);
\coordinate [label=left:{$p_2$}] (p2) at (0,-0.5);
\pgfmathsetmacro{\qxcoord}{sqrt(99.75)}
\coordinate [label=below:{$q$}] (q) at (\qxcoord,0);

\draw [color=black,very thick,-stealth] (p2) -- ($(p2)!0.55!(p1)$) node[right] {$t=1$}; \draw[color=black,very thick] (p2)-- (p1);
\draw [color=black,very thick, -stealth] (p1) -- ($(p1)!0.55!(q)$) node[above] {$t=3$}; \draw[color=black,very thick] (p1)-- (q);
\draw [color=black,very thick, -stealth] (p2) -- ($(p2)!0.55!(q)$) node[below] {$t=3$}; \draw[color=black,very thick] (p2)-- (q);

\foreach \point in {p1,p2,q} 
\path [draw=black, fill=white] (\point) circle (2pt); 

\end{tikzpicture}
\caption{An example where split-routing is cheaper\label{figureEx1}}
\end{center}
\end{figure}
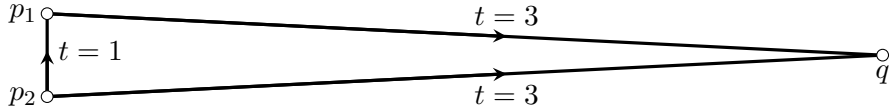
The flows demanded from $p_1$ and $p_2$ are $2$ and $4$, respectively. The weight function is $w(t)=\lceil (3t+1)/2\rceil$, i.e., $(3t+1)/2$ rounded up to
the nearest integer. This function is positive, non-decreasing and triangular, but not concave. For the example we only need the following
values:
\[
\begin{array}{c|ccccc} t & 1 & 2 & 3 & 4 & 6 \\ \hline w(t) & 2 & 4 & 5 & 7 & 10
\end{array}\]
Routing $1$ unit of the flow from $p_2$ via $p_1$ to $q$ gives a Gilbert network (Figure~\ref{figureEx1}) of total cost
$$w(1)\enorm{p_1-p_2} + w(3)\enorm{p_1-q} + w(3)\enorm{p_2-q} = 102.$$

For a Gilbert arborescence we route the flows from $p_1$ and $p_2$ to $q$ via some point $s$ as in Figure~\ref{figureEx2}.
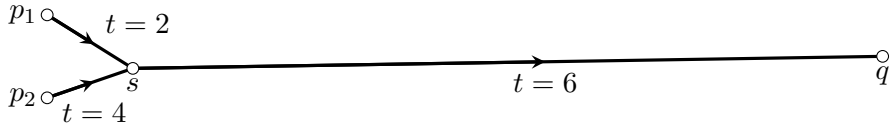
\begin{figure}
\begin{center}
\begin{tikzpicture}[scale=1.1, line join=round]

\coordinate (o) at (0,0);
\coordinate [label=left:{$p_1$}] (p1) at (0,0.5);
\coordinate [label=left:{$p_2$}] (p2) at (0,-0.5);
\pgfmathsetmacro{\qxcoord}{sqrt(99.75)}
\coordinate [draw=white,fill=white,label=below:{$q$}] (q) at (\qxcoord,0);
\pgfmathsetmacro{\pxcoord}{-0.035*sqrt(39)};
\coordinate (p) at (\pxcoord,-0.165);
\pgfmathsetmacro{\mxcoord}{2.5/sqrt(39)};
\coordinate (m) at (\mxcoord,0);

\draw [name path=simpson,color=white] (p) -- (q); 

\node [name path=Circle, draw,circle through=(p1),color=white] at (m) {}; 

\path [name intersections={of=simpson and Circle}]; 
\coordinate [label=below:$s$] (s) at (intersection-2); 

\begin{scope}[color=black,very thick]
\draw [-stealth] (p1) -- ($(p1)!0.55!(s)$) node[above right] {$t=2$}; \draw (p1)-- (s);
\draw [-stealth] (p2) -- ($(p2)!0.55!(s)$) node[below=4pt] {$t=4$}; \draw (p2)--(s);
\draw [-stealth] (s) -- ($(s)!0.55!(q)$) node[below] {$t=6$}; \draw (s) -- (q);
\end{scope}

\foreach \point in {p1,p2,s,q} 
\path [draw=black, fill=white] (\point) circle (2pt); 

\end{tikzpicture}
\end{center}
\caption{The minimum Gilbert arborescence\label{figureEx2}}
\end{figure}
We calculate a minimum Gilbert arborescence by using the weighted Melzak algorithm as described in \cite{gilbert-1967}. To construct the
weighted Fermat-Torricelli point $s$, first construct the unique point $p$ outside $\triangle p_1p_2q$ such that $\enorm{p-p_1}=0.7$ and
$\enorm{p-p_2}=0.4$ (Figure~\ref{figureEx3}).
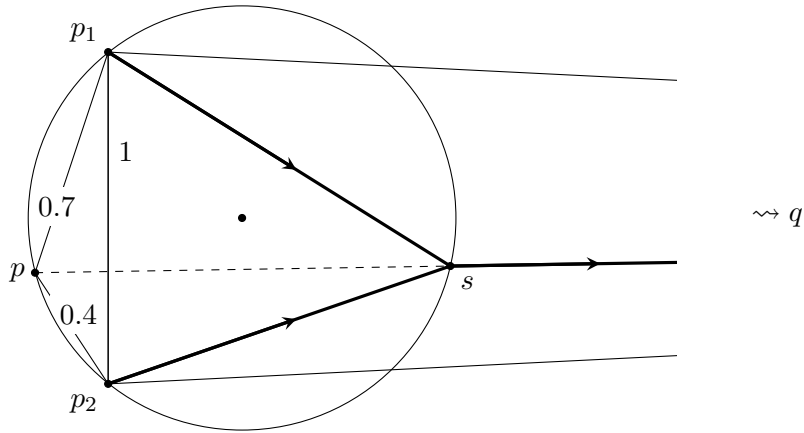
\begin{figure}
\begin{center}
\begin{tikzpicture}[scale=4.4, line join=round]

\coordinate (o) at (0,0);
\coordinate [label=above left:{$p_1$}] (p1) at (0,0.5);
\coordinate [label=below left:{$p_2$}] (p2) at (0,-0.5);
\pgfmathsetmacro{\qxcoord}{sqrt(99.75)}
\coordinate [label=below:{$q$}] (q) at (\qxcoord,0);
\coordinate (r) at (-0.8,0);
\draw [white] (r) circle (1pt);
\pgfmathsetmacro{\pxcoord}{-0.035*sqrt(39)};
\coordinate [label=left:{$p$}] (p) at (\pxcoord,-0.165);
\pgfmathsetmacro{\mxcoord}{2.5/sqrt(39)};
\coordinate (m) at (\mxcoord,0);

\draw (2,0) node{$\rightsquigarrow q$};

\clip (-0.5,-0.8) rectangle (1.7,0.8);

\draw (p1)--(p2)--(p)node [below right,pos=0.8,fill=white] {$0.4$}-- (p1) node [pos=0.3,fill=white] {$0.7$};

\draw (p1)--(p2) node [right,pos=0.3] {$1$} -- (q)  -- (p1);

\draw [name path=simpson, dashed] (p) -- (q); 

\node [name path=Circle, draw,circle through=(p1)] at (m) {}; 

\path [name intersections={of=simpson and Circle}]; 
\coordinate [label=below right:$s$] (s) at (intersection-2); 

\begin{scope}[color=black,very thick]
\draw [-stealth] (p1) -- ($(p1)!0.55!(s)$); \draw (p1)-- (s);
\draw [-stealth] (p2) -- ($(p2)!0.55!(s)$); \draw (p2)--(s);
\draw [-stealth] (s) -- ($(s)!0.05!(q)$); \draw (s) -- (q);
\end{scope}

\foreach \point in {p1,p2,q,p,m,s} 
\path [draw=black, fill=black] (\point) circle (0.3pt); 
\end{tikzpicture}
\end{center}
\caption{Constructing the weighted Fermat-Torricelli point\label{figureEx3}}
\end{figure}
Then construct the circumscribed circle of $\triangle pp_1p_2$, which will intersect the so-called \textit{weighted Simpson line} $pq$ in the
required point $s$. Using a little trigonometry, it can be seen that the resulting total cost is
\begin{align*}
&\quad w(2)\enorm{p_1-s}+w(4)\enorm{p_2-s}+w(6)\enorm{s-q}\\
&=w(6)\enorm{p-q}=\sqrt{9982.5+7\sqrt{3890.25}}\\
&=102.074\dots.
\end{align*}
This shows that the split routing we constructed is cheaper than the cheapest non-split routing, so that split routing can be necessary when the weight function is not concave. For the remainder of the paper we assume that the weight function $w$ satisfies Conditions~\eqref{eq:cost-ftn-non-neg}, \eqref{eq:cost-ftn-non-dec}, and  \eqref{eq:cost-ftn-concave}. In this case it is
known \cite{gilbert-1967, thomas-2006} that in the case where there is a single sink there always exists a minimum Gilbert network that is a
Gilbert arborescence. This means that we can (and will) without loss of generality only consider MGAs. (Note that in~\cite{cox-1998},
Condition~(\ref{eq:cost-ftn-trnglr}), which we call the \emph{triangular condition}, was incorrectly interpreted as concavity of the cost
function.)

 The \emph{Gilbert network problem} (GNP) is to find an MGN for a
given terminal set $N$, flow demands $t_{ij}$ and cost function $w(\cdot)$. Since its introduction in~\cite{gilbert-1967}, various aspects of the GNP
have been studied, although the emphasis has been on discovering geometric properties of MGNs
(see~\cite{cox-1998},~\cite{thomas-2006},~\cite{trietsch-1985},~\cite{trietsch-1999}). As in the Steiner problem, additional vertices can be
added to create a Gilbert network whose cost is less than would otherwise be possible, and these additional points are again called
\emph{Steiner points}. A Steiner point $s$ in $T$ is called \emph{locally minimal} if a perturbation of $s$ does not reduce the cost of $T$. A
Gilbert network is called \emph{locally minimal} if no perturbation of the Steiner points reduces the cost of $T$.

The special case of the Gilbert model that is of interest in this work is when $N = \{p_1,\ldots,p_n, q\}$ is a set of terminals in a Minkowski
space, where $p_1,\ldots,p_{n}$ are \emph{sources} with respective positive flow demands $t_1,\ldots,t_{n}$, and $q$ is the \emph{sink}.
All flows are between the sources and the sink; there are no flows between sources. It has been shown in~\cite{thomas-2006} that concavity of
the weight function implies that an MGN of this sort is a tree. Hence we refer to an MGN with this flow structure as a \emph{minimum Gilbert
arborescence} (MGA), and, as mentioned in the introduction, we refer to the problem of constructing such an MGA as the \emph{Gilbert
arborescence problem} (GAP).

If $v_1$ and $v_2$ are two adjacent vertices in a Gilbert arborescence, and the flow is from $v_1$ to $v_2$ then we denote the edge connecting
the two vertices by $v_1v_2$.

\section{Characterisation of Steiner Points}\label{section:characterisation}
In this section, we generalise a theorem of Lawlor and Morgan \cite{lawlor-1994} to give a local characterisation of Steiner points in an MGA.
The characterisation in \cite{lawlor-1994} holds for SMTs, which correspond to the case of MGAs with a constant weight function. Their theorem
is formulated for arbitrary Minkowski spaces with differentiable norm. Our proof is based on the proof of Lawlor and Morgan's theorem given in
\cite{swanepoel-1999}. A generalisation to non-smooth norms is contained in~\cite{swanepoel-2007} for SMTs and in \cite{thesis} for MGAs. Such a
generalisation is much more complicated and involves the use of the subdifferential calculus.

We first introduce some necessary definitions relating to Minkowski geometry, in particular with relation to dual spaces. For more details, see
\cite{Thompson}.

We denote the inner product of two vectors $x,y\in\R^n$ by $\ipr{x}{y}$. For any given norm $\norm{\cdot}$, the dual norm $\norm{\cdot}^\ast$ is
defined as follows:
\[\norm{z}^\ast = \sup_{\norm{x}\leq 1}\ipr{z}{x}.\]

We say that a Minkowski space $(\R^n,\norm{\cdot})$ is {\em smooth} if the norm is differentiable at any $x\neq o$, i.e., if
\[\lim_{t\to 0} \frac{\norm{x+th}-\norm{x}}{t} =: f_x(h)\]
exists for all $x,h\in \R^n$ with $x\neq o$. It follows easily that $f_x$ is a linear operator $f_x:\R^n\to\R$ and so can be represented by a
vector $x^\ast\in\R^n$, called the dual vector of $x$, such that $\ipr{x^\ast}{y}=f_x(y)$ for all $y\in\R^n$, and $\norm{x^\ast}^\ast=1$. In
fact $x^\ast$ is just the gradient of the norm at $x$, i.e., $x^\ast=\nabla\norm{x}$.

More generally, even if the norm is not differentiable at $x$, a vector $x^\ast\in \R^n$ is a {\em dual vector} of $x$ if $x^\ast$ satisfies
$\ipr{x^\ast}{x}=\norm{x}$ and $\norm{x^\ast}^\ast=1$. By the Hahn-Banach separation theorem, each non-zero vector in a Minkowski space has at
least one dual vector. A Minkowski space is then smooth if and only if each non-zero vector has a unique dual vector.

A norm is {\em strictly convex} if $\norm{x}=\norm{y}=1$ and $x\neq y$ imply that $\norm{\frac{1}{2}(x+y)}<1$, or equivalently, that the
\emph{unit sphere} \[ S(\norm{\cdot}) = \{x\in\R^n: \norm{x}=1\}\]  does not contain any straight line segment. A norm $\norm{\cdot}$ is smooth
[strictly convex] if and only if the dual norm $\norm{\cdot}^\ast$ is strictly convex [smooth, respectively].

\begin{theorem}\label{theorem:gilb-arb-charac}
Suppose a smooth Minkowski space $(\R^n,\norm{\cdot})$ is given together with  a weight function $w$ that satisfies
Conditions~\eqref{eq:cost-ftn-non-neg}--\eqref{eq:cost-ftn-concave}, sources $p_1,\dots,p_n\in\R^n$, and a single sink $q\in\R^n$, all different
from the origin $o$. Let the flow demand at $p_i$ be $t_i$. (See Figure~\ref{Fig:th1}.)
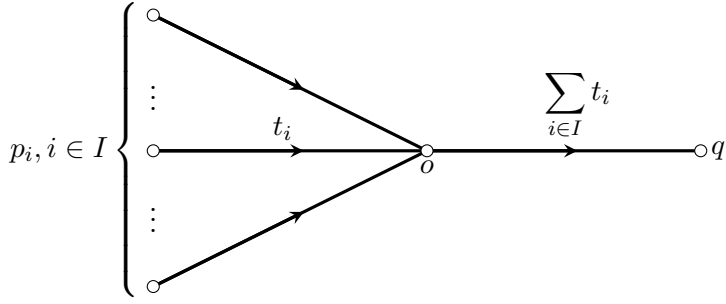
\begin{figure}
\begin{center}
\begin{tikzpicture}[scale=0.9]

\coordinate [label=below:{$o$}] (x) at (-2,0);
\coordinate [label=right:{$q$}] (o) at (2,0);
\coordinate (p1) at (-6,2);
\coordinate (p2) at (-6,0);
\coordinate (pn) at (-6,-2);

\draw (-6,-0.9) node {$\vdots$};
\draw (-6,0.9) node {$\vdots$};

\begin{scope}[color=black,very thick]
\draw [-stealth] (p1) -- ($(p1)!0.55!(x)$); \draw (p1)-- (x);
\draw [-stealth] (p2) node[left] {$p_i, i\in I\left\{\rule[-1.9cm]{0cm}{2cm}\right.$} -- ($(p2)!0.55!(x)$) node [above left] {$t_i$}; \draw (p2)--(x);
\draw [-stealth] (pn) -- ($(pn)!0.55!(x)$); \draw (pn)-- (x);
\draw [-stealth] (x) -- ($(x)!0.55!(o)$) node [above] {$\displaystyle\sum_{i\in I} t_i$}; \draw (x) -- (o);
\end{scope}

\foreach \point in {x,o,p1,p2,pn} 
\path [draw=black, fill=white] (\point) circle (2.5pt); 

\end{tikzpicture}
\end{center}
\caption{A Gilbert network with star topology, where $o$ is the origin.} \label{Fig:th1}
\end{figure}
For each $p_i$ let $p_i^\ast$ denote its dual vector, and let $q^\ast$ denote the dual vector of $q$. Then the Gilbert arborescence with edges
$op_i$, $i=1,\dots,n$ and $oq$, where all flows are routed via the Steiner point $o$, is a minimal Gilbert arborescence if and only if
\begin{equation}\label{balancing}
\sum_{i=1}^n w(t_i)p_i^\ast + w(\sum_{i=1}^n t_i)q^\ast = o
\end{equation}
and
\begin{equation}\label{collapsing}
\norm{\sum_{i\in I} w(t_i)p_i^\ast}^\ast\leq w(\sum_{i\in I} t_i)\text{ for all $I\subseteq\{1,\dots,n\}$.}
\end{equation}
\end{theorem}

Note: We think of Condition~\ref{balancing} as a flow-balancing condition at the Steiner point, and Condition~\ref{collapsing} as a condition
that ensures that the Steiner point does not split.

\begin{proof}
$(\Rightarrow)$ We are given that the star is not more expensive than any other Gilbert network with the same sources, sink, flows and weight
function.

In particular, $o$ is the so-called weighted Fermat-Torricelli point of the $n+1$ points $p_1,\dots,p_n,q$ with weights
$t_1,\dots,t_n,\sum_{i=1}^n t_i$, respectively, which implies the balancing condition \eqref{balancing}. We include a self-contained proof for
completeness. If the Steiner point $o$ is moved to $-te$, where $t\in\R$ and $e\in\R^n$ is a unit vector (in the norm), the resulting
arborescence is not better, by the assumption of minimality. Therefore, the function
\begin{align*}
 \fhi_e(t) &=\sum_{i=1}^n w(t_i)(\norm{p_i+te}-\norm{p_i})\\ &\quad+w(\sum_{i=1}^n t_i)(\norm{q+te}-\norm{q})\geq 0
 \end{align*}
attains its minimum at $t=0$. For $t$ in a sufficiently small neighbourhood of $0$, $p_i+te\neq o$ and $q+te\neq o$, hence $\fhi_e$ is
differentiable. Therefore,
\begin{align*}
0&=\fhi_e'(0) =\lim_{t\to 0} \left(\sum_{i=1}^n w(t_i)\frac{\norm{p_i+te}-\norm{p_i}}{t}\right.\\
& \qquad\qquad \qquad +\left.w(\sum_{i=1}^n t_i)\frac{\norm{q+te}-\norm{q}}{t}\right)\\
&= \sum_{i=1}^n w(t_i)\ipr{p_i^\ast}{e}+w(\sum_{i=1}^n t_i)\ipr{q^\ast}{e}\\
&= \ipr{\sum_{i=1}^n w(t_i)p_i^\ast+w(\sum_{i=1}^n t_i)q^\ast}{e}.
\end{align*}
Since this holds for all unit vectors $e$, \eqref{balancing} follows.

To show \eqref{collapsing} for each $I\subseteq\{1,\dots,n\}$, we may assume without loss of generality that $I\neq\emptyset$ and
$I\neq\{1,\dots,n\}$. Consider the Gilbert network obtained by splitting the Steiner point into two points $o$ and $+te$ ($t\in\R$, $e$ a unit
vector) as follows. Each $p_i$, $i \notin I$, is still adjacent to $o$ with flow demand $t_i$, and $q$ is joined to $o$ with flow demand $\sum_{i=1}^n t_i$,
but now each $p_i$, $i\in I$, is adjacent to $te$ with flow demand $t_i$, and $te$ is adjacent to $o$ with flow demand $\sum_{i\in I} t_i$, as shown in
Figure~\ref{Fig:split}.
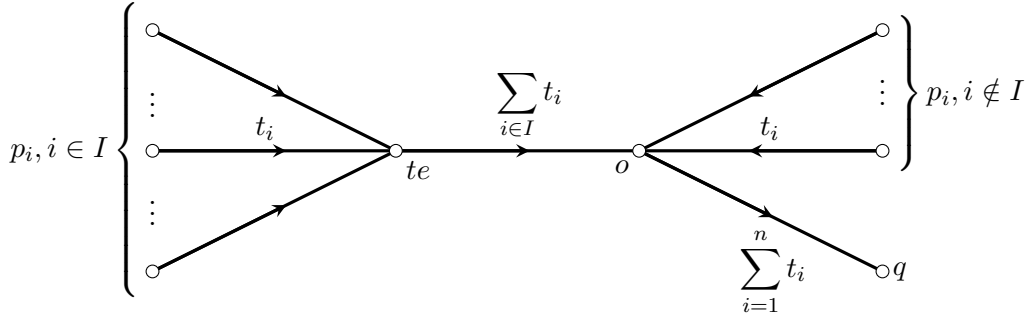
\begin{figure}
\begin{center}
\begin{tikzpicture}[scale=0.8]

\coordinate [label=below right:{$te$}] (x) at (-2,0);
\coordinate [label=below left:{$o$}] (o) at (2,0);
\coordinate (p1) at (-6,2);
\coordinate [label=left:{$p_i, i\in I\left\{\rule[-1.9cm]{0cm}{2cm}\right.$}] (p2) at (-6,0);
\coordinate (p3) at (6,2);
\coordinate (p5) at (6,0);
\draw ($(p5)+(0,1)$) node [right]{$\left.\rule[-0.4cm]{0cm}{1.6cm}\right\}p_i, i\notin I$};
\coordinate (pn) at (-6,-2);
\coordinate [label=right:{$q$}] (q) at (6,-2);

\draw (-6,-0.9) node {$\vdots$} ; 
\draw (-6,0.9) node {$\vdots$} ; 
\draw (6,1.1) node {$\vdots$} ;

\begin{scope}[color=black,very thick]
\draw [-stealth] (p1) -- ($(p1)!0.55!(x)$); \draw (p1)-- (x);
\draw [-stealth] (p2) -- ($(p2)!0.55!(x)$) node [above left] {$t_i$}; \draw (p2)--(x);
\draw [-stealth] (p3) -- ($(p3)!0.55!(o)$); \draw (p3)--(o);
\draw [-stealth] (p5) -- ($(p5)!0.55!(o)$) node [above right] {$t_i$}; \draw (p5)-- (o);
\draw [-stealth] (pn) -- ($(pn)!0.55!(x)$); \draw (pn)-- (x);
\draw [-stealth] (o) -- ($(o)!0.55!(q)$) node [below] {$\displaystyle\sum_{i=1}^n t_i$}; \draw (o) -- (q);
\draw [-stealth] (x) -- ($(x)!0.55!(o)$) node [above] {$\displaystyle\sum_{i\in I} t_i$}; \draw (x) -- (o);
\end{scope}

\foreach \point in {x,o,p1,p2,p3,p5,pn,q} 
\path [draw=black, fill=white] (\point) circle (3pt); 

\end{tikzpicture}
\end{center}
\caption{The Gilbert network obtained by splitting the Steiner point $o$.} \label{Fig:split}
\end{figure}
Since the new network cannot be better than the original star, we obtain that for any unit vector $e$, the function
$$
 \psi_e(t)=\sum_{i\in I} w(t_i)(\norm{p_i-te}-\norm{p_i})+w(\sum_{i=1}^n t_i)\abs{t}\geq 0
$$
attained its minimum at $t=0$. Although $\psi_e$ is not differentiable at $0$, we can still calculate as follows:
\begin{align*}
0 &\leq \lim_{t\to 0+}\frac{\psi_e(t)}{t}\\
&= \lim_{t\to0+}\sum_{i\in I} w(t_i)\frac{\norm{p_i-te}-\norm{p_i}}{t}+w(\sum_{i=1}^n t_i)\\
&= \ipr{\sum_{i\in I} w(t_i)p_i^\ast}{-e}+w(\sum_{i=1}^n t_i).
\end{align*}
Therefore, $\ipr{\sum_{i\in I} w(t_i)p_i^\ast}{e}\leq w(\sum_{i=1}^n t_i)$ for all unit vectors $e$, and \eqref{collapsing} follows from the
definition of the dual norm.

\noindent $(\Leftarrow)$ Now assume that $p_1^\ast,p_n^\ast,q$ are dual unit vectors that satisfy \eqref{balancing} and \eqref{collapsing}.
Consider an arbitrary Gilbert arborescence $T$ for the given data. For each $i$, let $P_i$ be the path in $T$ from $p_i$ to $q$, i.e., $P_i =
x_{1}^{(i)}x_2^{(i)}\dots x_{k_i}^{(i)}$, where $x_1^{(i)}=p_i, x_{k_i}^{(i)}=q$, and $x_j^{(i)} x_{j+1}^{(i)}$ are distinct edges of $T$ for
$j=1,\dots,k_i-1$. For each edge $e$ of $T$, let $S_e=\{i:\text{$e$ is on path $P_i$}\}$. Then the flow on $e$ is $\sum_{i\in S_e} t_i$ and the
total cost of $T$ is
\[\sum_{\substack{e=xy\text{ is}\\ \text{an edge of $T$}}} w(\sum_{i\in S_e} t_i)\norm{x-y}.\]
The cost of the star is
\begin{align*}
&\quad\sum_{i=1}^n w(t_i)\norm{p_i}+w(\sum_{i=1}^n t_i)\norm{q} \\
&= \sum_{i=1}^n w(t_i)\ipr{p_i^\ast}{p_i}+w(\sum_{i=1}^n t_i)\ipr{q^\ast}{q} \\
&= \sum_{i=1}^n w(t_i)\ipr{p_i^\ast}{p_i-q}\qquad\text{by \eqref{balancing}}\\
&= \sum_{i=1}^n w(t_i) \sum_{j=1}^{k_i-1}\ipr{p_i^\ast}{x_j^{(i)}-x_{j+1}^{(i)}}
\end{align*}
\begin{align*}
&= \sum_{\substack{e=xy\text{ is}\\ \text{an edge of $T$}}} \ipr{\sum_{i\in S_e} w(t_i) p_i^\ast}{x-y}\\
&\leq \sum_{\substack{e=xy\text{ is}\\ \text{an edge of $T$}}}\norm{\sum_{i\in S_e} w(t_i) p_i^\ast}^\ast\norm{x-y}\\
&\leq \sum_{\substack{e=xy\text{ is}\\ \text{an edge of $T$}}} w(\sum_{i\in S_e} t_i)\norm{x-y} \quad\text{by \eqref{collapsing}}.
\end{align*}
This concludes the proof. \end{proof}

Note that the necessity of the conditions \eqref{eq:cost-ftn-non-neg}, \eqref{eq:cost-ftn-non-dec}, \eqref{eq:cost-ftn-trnglr},
\eqref{balancing} and \eqref{collapsing} holds even if the weight function is not concave. It is only in the proof of the sufficiency that we
need all minimal Gilbert networks with a single sink to be arborescences.

\section{Degree of Steiner Points in a Minkowski plane with linear weight function}
We now apply the characterisation of the previous section in the two\nobreakdash-\hspace{0pt}dimensional case, assuming further that the weight
function is linear: $w(t)=d+ht$, $d>0, h\geq 0$.

\begin{theorem}\label{thm2}
In a smooth Minkowski plane and assuming a linear weight function $w(t)=d+ht$, $d>0, h\geq 0$, a Steiner point in an MGA necessarily has degree
$3$.
\end{theorem}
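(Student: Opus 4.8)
The plan is to apply the local characterisation of Theorem~\ref{theorem:gilb-arb-charac} at the Steiner point and then exclude degree $\ge 4$. Since all flow heads to the unique sink, a Steiner point $o$ of degree $m$ has exactly one outgoing edge (carrying the total flow) and $m-1$ incoming edges; writing the incoming neighbours as effective sources with flows $t_1,\dots,t_n$ (where $n=m-1$) and the outgoing neighbour as the effective sink with flow $T=\sum_i t_i$, local minimality forces the balancing condition \eqref{balancing} and the no-split condition \eqref{collapsing} on the unit dual vectors $u_1,\dots,u_n$ of the incoming edges and $v$ of the outgoing edge (this is exactly the necessity part of Theorem~\ref{theorem:gilb-arb-charac}, applied to the splitting and perturbation moves available inside the global MGA). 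Smoothness guarantees these dual vectors are unique, so the edge directions are well defined. Since a genuine Steiner point has degree at least $3$, it suffices to show $n\le 2$.

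Next I would rewrite the two conditions in a form adapted to the linear weight $w(t)=d+ht$. Put $f_i:=w(t_i)u_i$, so $\norm{f_i}^\ast=w(t_i)=d+ht_i$. Because $w(\sum_{i\in I}t_i)=\sum_{i\in I}w(t_i)-(\card{I}-1)d$, condition \eqref{collapsing} becomes exactly $\norm{\sum_{i\in I}f_i}^\ast\le\sum_{i\in I}\norm{f_i}^\ast-(\card{I}-1)d$ for every $I$; in particular each pair of incoming edges satisfies $\norm{f_i+f_j}^\ast\le\norm{f_i}^\ast+\norm{f_j}^\ast-d$, i.e. the triangle-inequality defect of any two incoming force vectors is at least $d$. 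Meanwhile \eqref{balancing} reads $\sum_{i=1}^n f_i=-w(T)v$, and since $\norm{v}^\ast=1$ this gives $\norm{\sum_i f_i}^\ast=w(T)=\sum_i\norm{f_i}^\ast-(n-1)d$: the \emph{total} defect of the incoming vectors is exactly $(n-1)d$.

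The contradiction for $n\ge 3$ comes from the tension between these two facts: balancing forces the $f_i$ to be nearly aligned (their sum attains almost the maximal length $\sum_i\norm{f_i}^\ast$, with deficit only $(n-1)d$), whereas the pairwise defect bound forces the directions $u_i$ to be angularly separated. I would make this quantitative in the plane by choosing a primal unit vector $e_0$ realising $\ipr{-v}{e_0}=1$ and a direction $e_1$ with $\ipr{v}{e_1}=0$. Pairing \eqref{balancing} with $e_0$ and $e_1$ gives the longitudinal and transverse relations $\sum_i\ipr{f_i}{e_0}=w(T)$ and $\sum_i\ipr{f_i}{e_1}=0$; since $\ipr{f_i}{e_0}\le\norm{f_i}^\ast$, the deficits $\delta_i:=\norm{f_i}^\ast-\ipr{f_i}{e_0}\ge 0$ sum to $(n-1)d$. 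Strict convexity of the dual unit circle (which holds because the primal norm is smooth) turns the pairwise defect bounds into lower bounds on the angular gaps between the $u_i$, and the transverse balance prevents all of them from pointing towards $-v$; together these cap $\sum_i\ipr{f_i}{e_0}$ strictly below $w(T)$ as soon as $n\ge 3$. (In the Euclidean model, with $u_i$ at angle $\alpha_i$ from $-v$, balancing is $\sum_i w(t_i)\cos\alpha_i=w(T)$ and $\sum_i w(t_i)\sin\alpha_i=0$, while the pairwise bound is $1-\cos(\alpha_i-\alpha_j)\ge d\bigl(2(w(t_i)+w(t_j))-d\bigr)/\bigl(2w(t_i)w(t_j)\bigr)$; a short computation shows the largest value of $\sum_i w(t_i)\cos\alpha_i$ compatible with these separations and the transverse balance is strictly less than $w(T)$ once $n\ge 3$.) This contradiction forces $n=2$, so the degree is $3$.

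The step I expect to be the main obstacle is exactly this last one: quantifying ``pairwise separation caps the attainable alignment'' in a \emph{general} smooth Minkowski plane. Unlike the Euclidean case there is no cosine law, the dual unit circle is an arbitrary strictly convex curve, and the separation thresholds depend on the individual flows $t_i$, so one cannot simply invoke a uniform $120^\circ$ bound. The tools for overcoming this are the strict convexity of the dual norm (so distinct directions force a strictly positive defect that varies monotonically with the angular gap) together with the exact equality $\norm{\sum_i f_i}^\ast=\sum_i\norm{f_i}^\ast-(n-1)d$ from balancing, which pins the configuration down tightly enough to exhibit a pair of incoming edges whose defect is driven below $d$, contradicting \eqref{collapsing}.
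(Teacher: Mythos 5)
Your setup is correct and matches the paper's opening moves: you localise at the Steiner point, invoke the necessity direction of Theorem~\ref{theorem:gilb-arb-charac}, and specialise to the linear weight, correctly observing that \eqref{collapsing} becomes ``every pair of incoming vectors has triangle-inequality defect at least $d$'' while \eqref{balancing} forces ``total defect exactly $(n-1)d$''. But the proposal stops exactly where the real work begins: the step that turns this tension into a contradiction for $n\ge 3$ is never carried out, and you say so yourself. What you offer in its place is a program --- strict convexity gives angular separation, transverse balance prevents alignment, ``together these cap $\sum_i\ipr{f_i}{e_0}$ strictly below $w(T)$'' --- plus an unproven parenthetical Euclidean computation. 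In a general smooth Minkowski plane there is no cosine law, the separation thresholds depend on the individual weights $w(t_i)$, and you give no mechanism for aggregating pairwise defect lower bounds into a lower bound on the total defect; that aggregation \emph{is} the theorem. So this is a genuine gap at the heart of the argument, not a routine verification left to the reader.

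The paper closes this gap with a specifically two-dimensional trick absent from your outline, and it needs no quantitative modulus of convexity. Order the scaled dual vectors $v_i^\ast=w(t_i)p_i^\ast$ and $w^\ast=w(\sum_i t_i)q^\ast$ by direction; by \eqref{balancing} they close up into a convex polygon with vertices $a_j^\ast=\sum_{i\le j}v_i^\ast$. For an edge $v_j^\ast$ not adjacent to $w^\ast$ (one exists once the degree is at least $4$), the diagonals $a_0^\ast a_j^\ast$ and $a_{j-1}^\ast a_n^\ast$ cross; applying the triangle inequality to the two triangles they cut off gives $\norm{a_j^\ast}^\ast+\norm{a_n^\ast-a_{j-1}^\ast}^\ast\geq\norm{v_j^\ast}^\ast+\norm{w^\ast}^\ast$, and the identity $w(t_j)+w(\sum_i t_i)=w(\sum_{i\le j}t_i)+w(\sum_{i\ge j}t_i)$ --- special to linear $w$ --- together with \eqref{collapsing} applied to the two diagonal sums closes the chain into an equality. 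Equality in the triangle inequality plus strict convexity of the dual norm then collapses the configuration: the paper concludes $p_1^\ast=\dots=p_n^\ast=-q^\ast$, i.e.\ the primal directions $p_i/\norm{p_i}$ and $-q/\norm{q}$ share a supporting line of the unit ball. Since a smooth plane's unit ball may still contain straight segments, the paper finishes in the primal space: using additivity of the norm along such a segment it reroutes the flow from $p_2$ through a point $s_1$ on the edge $p_1o$ and computes the cost change to be $(w(t_1+t_2)-w(t_1)-w(t_2))\norm{s_1}=-d\norm{s_1}<0$, contradicting minimality. Your defect bookkeeping is the right intuition and is implicitly present in the paper's chain of inequalities, but without the polygon/equality-case argument (or some complete substitute --- the squaring argument of Theorem~\ref{degree3} is unavailable here, as it requires an inner product) the proposal does not prove the theorem.
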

\begin{proof}
By Theorem~\ref{theorem:gilb-arb-charac}, an MGA with a Steiner point of degree $n+1$ exists in $\R^2$ with a smooth norm $\norm{\cdot}$ if and
only if there exist dual unit vectors $p_1^\ast,\dots,p_n^\ast,q^\ast\in\R^2$ such that
\[ \sum_{i=1}^n (d+ht_i) p_i^\ast + (d+h\sum_{i=1}^n t_i) q^\ast = o \]
and
$$
 \norm{\sum_{i\in I} (d+ht_i)p_i^\ast}^\ast \leq d+h\sum_{i\in I} t_i \quad\text{for all } I\subseteq\{1,\dots,n\}.
$$
Label the $p_i^\ast$ so that they are in order around the dual unit circle. Let $v_i^\ast=(d+ht_i)p_i^\ast$ and $w^\ast=(d+h\sum_{i=1}^n
t_i)q^\ast$. Then the conditions become
\[ v_1^\ast+\dots+v_n^\ast+w^\ast=o,\]
and
\begin{equation}\label{collapsing2}
\norm{\sum_{i\in I} v_i^\ast}^\ast\leq d+h\sum_{i\in I} t_i \quad \text{for all } I\subseteq\{1,\dots,n\}.
\end{equation}
Thus we may think of the vectors $v_1^\ast,\dots,v_n^\ast,w^\ast$ as the edges of a convex polygon with vertices $a_j^\ast=\sum_{i=1}^j
v_i^\ast$, $j=0,\dots,n$ in this order (see Figure~\ref{Fig:polygon}).

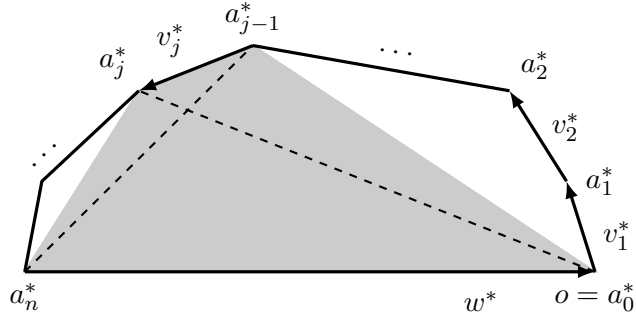
\begin{figure}
\begin{center}
\begin{tikzpicture}[thick, xscale=0.75, yscale=0.6]


\filldraw[black!20] (0,0) -- (10,0) -- (4,5) -- (2,4) -- cycle;

\begin{scope}[very thick]
\draw[-latex] (0,0) node[below] {$a_n^\ast$} -- (10,0) node[below] {$o=a_0^\ast$} node[below=3pt,pos=0.8] {$w^\ast$}; 
\draw[-latex] (10,0) -- (9.5,2) node[right=3pt,pos=0.4] {$v_1^\ast$}; 
\draw[-latex] (9.5,2) node[right=3pt] {$a_1^\ast$} -- (8.5,4) node[right=3pt,pos=0.6] {$v_2^\ast$}; 
\draw (8.5,4) node[above right] {$a_2^\ast$} -- (4,5) node[above, rotate=27, pos=0.4] {$\ddots$}; 
\draw[-latex] (4,5) node[above] {$a_{j-1}^\ast$} -- (2,4) node[above left] {$a_j^\ast$} node[above left=0pt,midway] {$v_j^\ast$}; 
\draw (2,4) -- (0.3,2) node[right=-4pt, rotate=79] {$\ddots$} -- (0,0);
\end{scope}
\draw[dashed, thick] (0,0) -- (4,5); \draw[dashed, thick] (10,0) -- (2,4);

\end{tikzpicture}
\end{center}
\caption{A polygon with edges corresponding to $v_j^\ast$.}\label{Fig:polygon}
\end{figure}
Assume for the purpose of finding a contradiction that $n > 3$. Then the polygon has at least $4$ sides. Note that the diagonals $a_0^\ast
a_{j}^\ast$ and $a_{j-1}^\ast a_n^\ast$ intersect. Applying the triangle inequality to the two triangles formed by these diagonals and the two
edges $v_j^\ast$ and $w^\ast$ (as illustrated in Figure~\ref{Fig:polygon}), we obtain
\begin{align*}
 \norm{a_j^\ast}^\ast+\norm{a_n^\ast-a_{j-1}^\ast}^\ast
&\geq \norm{v_j^\ast}^\ast+\norm{w^\ast}^\ast\\
&= d+ht_j + d+h\sum_{i=1}^n t_i\\
&= d+h\sum_{i=1}^j t_i + d + h\sum_{i=j}^n t_i\\
&\geq \norm{\sum_{i=1}^j v_i^\ast}^\ast+\norm{\sum_{i=j}^n v_i^\ast}^\ast \qquad\text{by \eqref{collapsing2}}\\
&= \norm{a_j^\ast}^\ast+\norm{a_n^\ast-a_{j-1}^\ast}^\ast.
\end{align*}
Therefore, equality holds throughout, and we obtain equality in the triangle inequality. Since we assume the norm is smooth, the dual norm is
strictly convex and it follows that $v_j^\ast$ and $w^\ast$ are parallel. This holds for all $j=2,\dots,n-1$. It follows that
$p_1^\ast=\dots=p_n^\ast=-q^\ast$. Geometrically this means that the unit vectors $\frac{1}{\norm{p_i}}p_i$ and $-\frac{1}{\norm{q}}q$ all have
the same supporting line on the unit ball. We can think of this condition on the vectors $p_i$ and $q$ as a generalisation of collinearity to
Minkowski space.

Choose a point $s_2$ on the edge $op_i$ such that the line through $s_2$ parallel to $op_n$ intersects the edge $op_1$ in $s_1$, say, with
$s_1\neq o$. See Figure~\ref{Fig:unitball}.
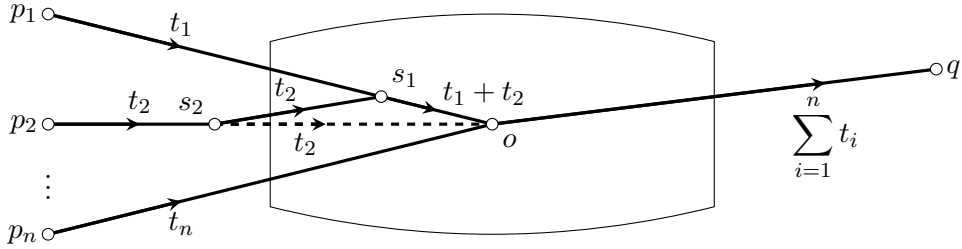
\begin{figure}
\begin{center}
\begin{tikzpicture}[scale=0.73]
\draw (-4,1.5) parabola bend (0,2) (4,1.5) -- (4,-1.5) parabola bend (0,-2) (-4,-1.5) -- cycle;

\coordinate [label=below right:{$o$}] (o) at (0,0);
\coordinate [label=left:{$p_1$}] (p1) at (-8,2);
\coordinate [label=left:{$p_2$}] (p2) at (-8,0);
\coordinate [label=left:{$p_n$}] (pn) at (-8,-2);
\coordinate [label=above right:{$s_1$}] (s1) at (-2,0.5);
\coordinate [label=above left:{$s_2$}] (s2) at (-5,0);
\coordinate [label=right:{$q$}] (q) at (8,1);

\draw (-8,-1) node {$\vdots$};

\begin{scope}[color=black,very thick]
\draw (p1)-- (o);
\draw [-stealth] (p1) -- ($(p1)!0.4!(s1)$) node [above] {$t_1$};
\draw [-stealth] (s1) -- ($(s1)!0.5!(o)$) node[above right=-2pt] {$t_1+t_2$};
\draw [-stealth] (p2) -- ($(p2)!0.55!(s2)$) node [above] {$t_2$}; \draw (p2)--(s2);
\draw [-stealth,dashed] (s2) -- ($(s2)!0.4!(o)$) node[below left=-1pt] {$t_2$}; \draw[dashed] (s2)--(o);
\draw (pn)-- (o); \draw [-stealth] (pn) -- ($(pn)!0.3!(o)$) node [below] {$t_n$};
\draw [-stealth] (s2) -- ($(s2)!0.55!(s1)$) node [above left=-1pt] {$t_2$}; \draw (s2)--(s1);
\draw [-stealth] (o) -- ($(o)!0.75!(q)$) node [below] {$\displaystyle\sum_{i=1}^n t_i$}; \draw (o) -- (q);
\end{scope}

\foreach \point in {o,p1,p2,pn,s1,s2,q} 
\path [draw=black, fill=white] (\point) circle (3pt); 

\end{tikzpicture}
\end{center}
\caption{Illustration of the proof of Theorem~\ref{thm2} for a unit ball with straight line segments on the boundary.}\label{Fig:unitball}
\end{figure}
Because of the straight line segments on the boundary of the unit ball,  $\norm{x+y}=\norm{x}+\norm{y}$ for any $x,y$ such that the unit vectors
$\frac{1}{\norm{x}}x$ and $\frac{1}{\norm{y}}y$ lie on this segment. In particular,
\begin{equation}\label{triangleineq}
\norm{s_2-s_1}+\norm{s_1-o}=\norm{s_2-o}.
\end{equation}
Now replace $p_2o$ by the edges $p_2s_2$ and $s_2 s_1$ , replace $p_1o$ by $p_1s_1$ and $s_1o$, and add the flow demand $t_2$ to $s_1o$. The change in
cost in the new Gilbert arborescence is
\begin{align*}
&\quad (w(t_1)\norm{p_1-s_1}+w(t_1+t_2)\norm{s_1-o} +w(t_2)\norm{p_2-s_2}+w(t_2)\norm{s_2-s_1}) \\
& \qquad - \left(w(t_1)\norm{p_1-o}-w(t_2)\norm{p_2-o}\right)\\
& = -w(t_1)\norm{s_1}-w(t_2)\norm{s_2}+w(t_1+t_2)\norm{s_1}+w(t_2)(\norm{s_2}-\norm{s_1}) \qquad\text{by \eqref{triangleineq}}\\
&= (w(t_1+t_2)-w(t_1)-w(t_2))\norm{s_1}\\
&= (d+h(t_1+t_2)-(d+ht_1)-(d+ht_2))\norm{s_1}\\
&= -d\norm{s_1} < 0.
\end{align*}
We have shown that a Gilbert arborescence with a Steiner point of degree at least $4$ can be decreased in cost. Hence, in an MGA a Steiner point
must necessarily be of degree $3$.
\end{proof}

\section{Degree of Steiner points in Euclidean space}

We now consider Gilbert arborescences in Euclidean space (of arbitrary dimension) with more general weight functions, including  weight
functions of the form $w(t)=d+ht^\alpha$, where $d, h>0$ and $0<\alpha\leq 1$. Note that for all these values of $\alpha$, the weight function
$w$ satisfies all of the conditions~\eqref{eq:cost-ftn-non-neg}--\eqref{eq:cost-ftn-concave}. We show that if $0<\alpha\leq 1/2$ or $\alpha=1$,
then the maximum degree of a Steiner point is $3$, while for each $\alpha\in(1/2,1)$ we provide an example of an MGA with a Steiner point of
degree $4$. These examples are three-dimensional, and the amount of flow goes to infinity as $\alpha$ approaches $1/2$ or $1$. When $\alpha=1$
the weight function is linear, and the previous section shows that Steiner points are necessarily of degree $3$ in the Euclidean plane. We have
no examples of degree $4$ Steiner points in the Euclidean plane for higher values of $\alpha$, and we consider their existence to be highly
unlikely.

We show that there is a very general class of weight functions for which the Steiner points are necessarily of degree $3$ (Theorem~\ref{degree3}
below). Our proof is completely independent of the dimension.

We begin by reformulating Theorem~\ref{theorem:gilb-arb-charac} for Euclidean spaces in the following straightforward corollary.

\begin{corollary}\label{cor}
A Steiner point of degree $m+1$ is possible in some MGA in Euclidean space with a weight function $w(\cdot)$ that satisfies
Conditions~\eqref{eq:cost-ftn-non-neg}--\eqref{eq:cost-ftn-concave}, if and only if there exist vectors $v_1,\dots,v_m$ and flow demands $t_1,\dots,
t_m>0$ such that
\begin{gather}
\label{a} \enorm{v_i}=w(t_i), \qquad i=1,\dots,m \\
\label{b} \enorm{\sum_{i=1}^m v_i}=w(\sum_{i=1}^m t_i), \\
\label{c} \forall I\subseteq\{1,\dots,m\}\text{ with } 2\leq\card{I}\leq m-2,\quad \enorm{\sum_{i\in I} v_i}\leq w(\sum_{i\in I} t_i).
\end{gather}
\end{corollary}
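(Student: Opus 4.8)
The plan is to read the corollary as the Euclidean specialisation of Theorem~\ref{theorem:gilb-arb-charac}, exploiting the fact that the Euclidean norm is self\nobreakdash-dual. In $(\R^n,\enorm{\cdot})$ the dual norm $\norm{\cdot}^\ast$ coincides with $\enorm{\cdot}$, the space is smooth, and the unique dual vector of any $x\neq o$ is the unit vector $x/\enorm{x}$; conversely every unit vector is the dual vector of each of its positive multiples. This last remark is what lets me ignore the actual positions of the terminals and work purely with directions: once a system of unit dual vectors is produced, I realise it by placing each source and the sink at, say, unit distance from $o$ in the prescribed directions, with the prescribed tonnages.

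For the forward direction I would start from a degree\nobreakdash-$(m+1)$ Steiner point in an MGA and apply Theorem~\ref{theorem:gilb-arb-charac} with $n=m$, obtaining dual unit vectors $p_1^\ast,\dots,p_m^\ast,q^\ast$ satisfying \eqref{balancing} and \eqref{collapsing}. Setting $v_i=w(t_i)p_i^\ast$ immediately gives \eqref{a}, since $\enorm{p_i^\ast}=1$. The balancing condition \eqref{balancing} reads $\sum_{i=1}^m v_i=-w(\sum_{i=1}^m t_i)q^\ast$; taking Euclidean norms and using $\enorm{q^\ast}=1$ yields \eqref{b}. Finally, for each subset $I$ the collapsing condition \eqref{collapsing} is precisely $\enorm{\sum_{i\in I}v_i}\le w(\sum_{i\in I}t_i)$, and restricting to the relevant cardinalities gives \eqref{c}.

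For the converse I would define $p_i^\ast=v_i/w(t_i)$ and $q^\ast=-(\sum_{i=1}^m v_i)/w(\sum_{i=1}^m t_i)$; by \eqref{a} and \eqref{b} these are genuine unit vectors, hence admissible Euclidean dual vectors, and a direct substitution shows that \eqref{balancing} holds. It then remains to verify \eqref{collapsing} for every $I\subseteq\{1,\dots,m\}$ and to invoke the sufficiency half of Theorem~\ref{theorem:gilb-arb-charac}. Here the bookkeeping over cardinalities is the crux: the cases $\card{I}=0$ (where $0\le w(0)$ by \eqref{eq:cost-ftn-non-neg}), $\card{I}=1$ (equality, from \eqref{a}), and $\card{I}=m$ (equality, from \eqref{b}) hold with no extra hypothesis, so the substantive content of \eqref{collapsing} is carried by the intermediate cardinalities recorded in \eqref{c}.

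The step I expect to be the main obstacle is exactly pinning down which cardinalities are automatic and which must be hypothesised, and in particular confirming that the large subsets are controlled. By \eqref{balancing} a subset $I$ and its complement within the full system of $m+1$ edge\nobreakdash-vectors (sink included) have equal left\nobreakdash-hand sides, $\enorm{\sum_{i\in I}v_i}=\enorm{\sum_{i\notin I}v_i+w(\sum_{i=1}^m t_i)q^\ast}$, so the natural move is to trade a large subset for its small complement. The delicate point is that the flow, and hence the right\nobreakdash-hand bound $w(\sum_{i\in I}t_i)$, is \emph{not} symmetric under this exchange, so the reduction is not free; this asymmetry at the top end of the cardinality range is where I would concentrate the argument. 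Once every subset is accounted for, the sufficiency direction of Theorem~\ref{theorem:gilb-arb-charac} certifies that the star on the realised configuration is an MGA with a Steiner point of degree $m+1$, closing the equivalence.
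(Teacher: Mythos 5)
Your overall route --- reading Corollary~\ref{cor} as the Euclidean specialisation of Theorem~\ref{theorem:gilb-arb-charac}, using self-duality of the Euclidean norm, setting $v_i=w(t_i)p_i^\ast$ and recovering $q^\ast$ from \eqref{balancing} --- is exactly what the paper intends (the paper in fact states the corollary with no proof, as a ``reformulation''), and your forward direction is complete. The converse, however, is left with a genuine gap at precisely the point you flag and then postpone: Theorem~\ref{theorem:gilb-arb-charac} requires the collapsing condition \eqref{collapsing} for \emph{every} $I\subseteq\{1,\dots,m\}$, and the cardinality $\card{I}=m-1$ is covered neither by \eqref{a} (singletons), nor by \eqref{b} ($\card{I}=m$), nor by \eqref{c}, which as printed stops at $m-2$. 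Your complementation identity only converts a source set of size $m-1$ into a two-element set of edge vectors containing the sink vector, and, as you yourself observe, the required bound is then $w$ of the flow on the connecting edge, namely $w(\sum_{i\neq k}t_i)$, which none of \eqref{a}--\eqref{c} controls. So the proof as proposed does not close.

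Moreover it cannot be closed, because the corollary as printed is false: the range in \eqref{c} should be $2\leq\card{I}\leq m-1$. Counterexample to the printed form: take $m=3$, the constant weight function $w\equiv d>0$ (which satisfies Conditions~\eqref{eq:cost-ftn-non-neg}--\eqref{eq:cost-ftn-concave}), arbitrary $t_1,t_2,t_3>0$, and $v_1=v_2=(d,0,0)$, $v_3=(-d,0,0)$. Then \eqref{a} and \eqref{b} hold, and \eqref{c} is vacuous because $2\leq\card{I}\leq 1$ is impossible; yet $\enorm{v_1+v_2}=2d>d=w(t_1+t_2)$ violates \eqref{collapsing} for $I=\{1,2\}$, so by Theorem~\ref{theorem:gilb-arb-charac} the star is not minimal --- and indeed no MGA with constant weight (i.e.\ no Euclidean SMT) contains a Steiner point of degree $4$, as also follows from Theorem~\ref{degree3} applied to $w\equiv d$. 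With the corrected range $2\leq\card{I}\leq m-1$, every subset of sources is accounted for ($\card{I}=0$ trivially, $\card{I}=1$ by \eqref{a}, $\card{I}=m$ by \eqref{b}, the rest by \eqref{c}), and your converse closes immediately via the sufficiency half of Theorem~\ref{theorem:gilb-arb-charac}, with no complementation argument needed. The corrected form is also the one the paper actually uses: in Proposition~\ref{degree4proposition}, for $m=3$, the pairwise conditions $\enorm{v_i+v_j}\leq w(2t)$ (i.e.\ $\card{I}=2=m-1$) are imposed, although the printed \eqref{c} would demand nothing there.
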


For the proof of Theorem~\ref{degree3} we need to establish the following inequality valid for functions with convex derivative.

\begin{lemma}\label{ineqlemma}
If $f:[0,\infty)\to\R$ is differentiable with derivative $f'$ convex, then for all $m\geq 2$ and all $t_i\geq 0$
\textup{(}$i=1,2,\dots,m$\textup{)},
\begin{equation}\label{ineq}
 \frac{(m-1)(m-2)}{2}f(0)+\sum_{1\leq i<j\leq m} f(t_i+t_j) \leq (m-2)\sum_{i=1}^m f(t_i)+f\left(\sum_{i=1}^m t_i\right).
 \end{equation}
\end{lemma}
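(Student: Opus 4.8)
The plan is to introduce the defect function
\[
\Phi(t_1,\dots,t_m) := (m-2)\sum_{i=1}^m f(t_i) + f\Bigl(\sum_{i=1}^m t_i\Bigr) - \frac{(m-1)(m-2)}{2}f(0) - \sum_{1\le i<j\le m} f(t_i+t_j),
\]
so that \eqref{ineq} is precisely the assertion $\Phi\ge 0$ on the nonnegative orthant $[0,\infty)^m$. I would prove this by showing that $\Phi$ vanishes at the origin and is nondecreasing in each coordinate, so that lowering every $t_i$ down to $0$ can only decrease $\Phi$.

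First I would check the base value. Substituting $t_1=\dots=t_m=0$ collapses every term to a multiple of $f(0)$, with total coefficient $m(m-2)+1-\frac{(m-1)(m-2)}{2}-\binom{m}{2}$. This simplifies, since $m(m-2)+1=(m-1)^2$ and $\frac{(m-1)(m-2)}{2}+\frac{m(m-1)}{2}=(m-1)^2$, to $(m-1)^2-(m-1)^2=0$; hence $\Phi(0,\dots,0)=0$.

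Next, since $f$ is differentiable, so is $\Phi$, and a direct computation gives, for each $j$,
\[
\frac{\partial\Phi}{\partial t_j} = (m-2)f'(t_j) + f'\Bigl(\sum_{i=1}^m t_i\Bigr) - \sum_{i\ne j} f'(t_i+t_j).
\]
Writing $u=t_j$, $b_i=t_i$ for $i\ne j$, and $S'=\sum_{i\ne j}t_i$, and setting $k=m-1$, this equals $(k-1)f'(u)+f'(u+S')-\sum_{i\ne j}f'(u+b_i)$. The crux is therefore the elementary convexity inequality: for a convex function $\phi$ and reals $u,b_1,\dots,b_k\ge 0$ with $S'=\sum_i b_i$, one has $\sum_{i=1}^k \phi(u+b_i)\le (k-1)\phi(u)+\phi(u+S')$. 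I would prove this by writing each point as the convex combination $u+b_i=(1-b_i/S')u+(b_i/S')(u+S')$ when $S'>0$ (the degenerate case $S'=0$ forces all $b_i=0$ and gives equality), applying convexity of $\phi$ termwise, and summing, using $\sum_i b_i/S'=1$. Applied with $\phi=f'$, which is convex by hypothesis, this yields $\partial\Phi/\partial t_j\ge 0$.

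Finally, nonnegativity of every partial derivative means $\Phi$ is nondecreasing in each variable separately, so reducing the coordinates to $0$ one at a time only decreases the value; combined with $\Phi(0,\dots,0)=0$ this gives $\Phi\ge 0$, which is \eqref{ineq}. The only genuine obstacle is the convexity inequality for $f'$, and the convex-combination argument above disposes of it cleanly; the rest is bookkeeping. (An essentially equivalent route is induction on $m$: putting $t_m=0$ turns $\Phi_m$ into exactly $\Phi_{m-1}$, and the same one-variable monotonicity shows $\Phi_m$ is minimised on the slice $t_m=0$, reducing the claim to the inductive hypothesis with base case $m=2$, where both sides agree.)
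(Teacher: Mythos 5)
Your proof is correct, and its crux---the inequality $\sum_{i=1}^{k}\phi(u+b_i)\le (k-1)\phi(u)+\phi(u+S')$ for convex $\phi$ with $S'=\sum_i b_i$, proved by writing each $u+b_i$ as a convex combination of $u$ and $u+S'$ and then applied to $\phi=f'$ to make a derivative of the defect nonnegative---is exactly the computation at the heart of the paper's proof. The difference is structural: the paper argues by induction on $m$, fixing $t_1,\dots,t_{m-1}$, varying only $x=t_m$, and invoking the inductive hypothesis to bound its one-variable defect $g$ at $x=0$; you instead observe that the same monotonicity holds in \emph{every} coordinate, so you can march all variables down to the origin, where the defect vanishes by a direct counting identity ($m(m-2)+1=(m-1)^2$), with no induction needed. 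This is a genuine, if modest, streamlining: it replaces the inductive bookkeeping by the single evaluation $\Phi(0,\dots,0)=0$, and your closing parenthetical correctly identifies why the two arguments are equivalent---setting $t_m=0$ in $\Phi_m$ yields exactly $\Phi_{m-1}$, so the paper's induction is precisely your coordinate-by-coordinate descent performed one step at a time. What the paper's formulation buys is that each step needs monotonicity only in the last variable; what yours buys is symmetry and self-containedness. Both proofs handle the degenerate case (the paper's $T=0$, your $S'=0$) the same way, and both need only one-sided differentiability at $0$, so there is no gap on either route.
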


Note that in \eqref{ineq}, as well as in the sequel, the summation $\sum_{1\leq i<j\leq m}$ means that the sum is over all $m(m-1)/2$ pairs
$(i,j)$ that satisfy $1\leq i<j\leq m$.

\begin{proof}
We use induction on $m\geq 2$. The base case $m=2$ is trivial.

Assume now that $m\geq 3$ and that the lemma holds for $m-1$; in particular we have that
\begin{equation}\label{indhyp}
 \frac{(m-2)(m-3)}{2}f(0)+\sum_{1\leq i<j\leq m-1} f(t_i+t_j) \leq (m-3)\sum_{i=1}^{m-1} f(t_i)+f(\sum_{i=1}^{m-1} t_i).
\end{equation}
Consider $x:=t_m$ to be variable and $t_1,\dots,t_{m-1}$ fixed. Set $T:=\sum_{i=1}^{m-1}t_i$, and define
\begin{align*}
 g(x) &:= (m-2)\sum_{i=1}^m f(t_i)+f(\sum_{i=1}^m t_i)-\sum_{1\leq i<j\leq m}f(t_i+t_j)\\
      &=  (m-2)\sum_{i=1}^{m-1} f(t_i) + (m-2)f(x)+f(T+x)\\
      & \qquad\qquad-\sum_{1\leq i<j\leq m-1}f(t_i+t_j)-\sum_{i=1}^{m-1} f(t_i+x).
\end{align*}
We have to show that $g(x)\geq \frac{(m-1)(m-2)}{2}f(0)$ for all $x>0$. First of all,
\begin{align*}
g(0) &= (m-2)\sum_{i=1}^{m-1}f(t_i)+(m-2)f(0)+f(T)\\
 & \qquad\qquad -\sum_{1\leq i<j\leq m-1}f(t_i+t_j)-\sum_{i=1}^{m-1}f(t_i)\\
&=(m-3)\sum_{i=1}^{m-1}f(t_i)+f(T)-\sum_{1\leq i<j\leq m-1}f(t_i+t_j)+(m-2)f(0)\\
&\geq \frac{(m-2)(m-3)}{2}f(0)+(m-2)f(0) \quad\text{(by \eqref{indhyp})}\\
&= \frac{(m-1)(m-2)}{2}f(0).
\end{align*}
It is therefore sufficient to show that $g'(x)\geq0$ for all $x>0$. We have
\[ g'(x)=(m-2)f'(x)+f'(T+x)-\sum_{i=1}^{m-1}f'(t_i+x).\]
If $T=0$ then $t_i=0$ for all $i=1,\dots m-1$, and then $g'(x)$ is identically $0$. We may therefore assume without loss of generality that
$T>0$. Write each $x+t_j$ as a convex combination of $x$ and $x+T$:
\begin{equation*}\label{4}
t_j+x=\left(1-\frac{t_j}{T}\right) x+ \frac{t_j}{T}(x+T).
\end{equation*}
Since $f'$ is convex,
\begin{align*}
\sum_{j=1}^{m-1}f'(t_j+x) &\leq \sum_{j=1}^{m-1}\left(1-\frac{t_j}{T}\right)f'(x)+\frac{t_j}{T} f'(x+T))\\
&= (m-2)f'(x)+f'(x+T),
\end{align*}
which gives $g'(x)\geq0$. This finishes the induction step and the proof.
\end{proof}

\begin{theorem}\label{degree3}
If the weight function $w(\cdot)$ satisfies Conditions~\eqref{eq:cost-ftn-non-neg}--\eqref{eq:cost-ftn-concave}, and is differentiable with
$(w^2)'$ convex and $w(0)>0$, then all Steiner points in MGAs have degree $3$.
\end{theorem}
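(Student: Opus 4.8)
The plan is to assume a Steiner point of degree $m+1$ with $m\ge 3$ and derive a contradiction from the collapsing conditions together with the convexity hypothesis. By Corollary~\ref{cor} (and the underlying Theorem~\ref{theorem:gilb-arb-charac}, whose collapsing inequality holds for \emph{every} subset of the sources, in particular for every pair), there exist vectors $v_1,\dots,v_m$ with $\enorm{v_i}=w(t_i)$, with $\sum_{i=1}^m v_i$ of Euclidean length $w(T)$, where $T:=\sum_{i=1}^m t_i$, and with $\enorm{v_i+v_j}\le w(t_i+t_j)$ for all $i<j$. Summing the squared pair inequalities and expanding via $\enorm{\sum_{i=1}^m v_i}^2=\sum_i\enorm{v_i}^2+2\sum_{i<j}\ipr{v_i}{v_j}$ yields the identity
\[\sum_{1\le i<j\le m}\enorm{v_i+v_j}^2=(m-2)\sum_{i=1}^m\enorm{v_i}^2+\enorm{\sum_{i=1}^m v_i}^2,\]
so that the collapsing conditions force
\[(m-2)\sum_{i=1}^m w(t_i)^2+w(T)^2\le\sum_{1\le i<j\le m}w(t_i+t_j)^2.\]
Hence the entire theorem reduces to proving the \emph{reverse strict} inequality as a purely analytic fact about $w$.

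Accordingly, write $g:=w^2$ and set $F(g):=g(T)+(m-2)\sum_{i=1}^m g(t_i)-\sum_{1\le i<j\le m}g(t_i+t_j)$; the goal is $F(g)>0$. A direct computation shows that $F$, as a linear functional of $g$, annihilates the functions $t$ and $t^2$, while $F(1)=\binom{m-1}{2}$. The hypothesis that $(w^2)'=g'$ is convex means $g''$ is nondecreasing, so $g$ admits the representation $g(x)=g(0)+g'(0)x+\tfrac12 g''(0^+)x^2+\tfrac12\int_0^\infty (x-s)_+^2\,d\mu(s)$ with $\mu:=(g')''\ge 0$ a nonnegative measure. Applying $F$ and using that it kills the linear and quadratic parts, I obtain
\[F(g)=\binom{m-1}{2}\,g(0)+\tfrac12\int_0^\infty F(\phi_s)\,d\mu(s),\qquad \phi_s(x):=(x-s)_+^2.\]
Since $g(0)=w(0)^2>0$ and $\mu\ge 0$, the desired strict positivity of $F(g)$ follows at once, provided the kernel values $F(\phi_s)$ are nonnegative.

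The main obstacle is therefore this kernel inequality: for all $s\ge 0$,
\[(T-s)_+^2+(m-2)\sum_{i=1}^m (t_i-s)_+^2\ \ge\ \sum_{1\le i<j\le m}(t_i+t_j-s)_+^2.\]
I expect to prove it by ordering $t_1\le\cdots\le t_m$ and splitting into cases according to where $s$ falls among the thresholds $t_i$ and the pair-sums $t_i+t_j$, since each truncated square is an explicit piecewise-quadratic with a single breakpoint, making the inequality finite and combinatorial. Alternatively, because $\phi_s$ itself has nondecreasing second derivative $2\,\mathbf{1}_{\{x>s\}}$ and hence lies in the same convexity class, one may argue by induction on $m$; the base case $m=3$ is exactly the statement that the third finite difference of $g$ plus $g(0)$ is positive, which is immediate from $g''$ nondecreasing and $g(0)>0$.

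Granting $F(\phi_s)\ge 0$, the representation gives $F(g)\ge\binom{m-1}{2}\,w(0)^2>0$, directly contradicting the inequality forced by the collapsing conditions. Therefore $m\le 2$, i.e.\ every Steiner point has degree at most $3$; combined with the standard fact that a Steiner point in a locally minimal tree has degree at least $3$, its degree is exactly $3$.
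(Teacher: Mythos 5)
Your first half coincides with the paper's own proof: squaring the balancing identity and the pairwise collapsing inequalities (which, as you correctly note, hold for every pair by Theorem~\ref{theorem:gilb-arb-charac}, sidestepping the cardinality range stated in Corollary~\ref{cor}) yields exactly the paper's inequality~\eqref{A}, i.e.\ $F(w^2)\leq 0$ in your notation. Your reformulation of the remaining analytic step is also correct and clean: $F$ annihilates $x$ and $x^2$, $F(1)=\binom{m-1}{2}$, so the theorem follows once $F(g)\geq\binom{m-1}{2}\,g(0)$ is established for $g=w^2$. But that inequality is precisely the paper's Lemma~\ref{ineqlemma} (with $f=g$), and this is where your argument has a genuine gap rather than a proof.

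The kernel inequality $F(\phi_s)\geq 0$ for $\phi_s(x)=(x-s)_+^2$ --- which after your representation carries the entire content of the theorem, and which is exactly Lemma~\ref{ineqlemma} applied to $f=\phi_s$ (note $\phi_s(0)=0$) --- is never proved. The case-analysis route is only announced (``I expect to prove it by ordering\dots''), and the induction-on-$m$ route supplies only the base case $m=3$; the inductive step is exactly the nontrivial part of the paper's lemma (fix $t_1,\dots,t_{m-1}$, treat $x=t_m$ as a variable, verify the value at $x=0$ via the inductive hypothesis, and verify monotonicity by writing each $t_j+x$ as a convex combination of $x$ and $x+T$ and using convexity of $f'$). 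Moreover, if you do carry out that induction inside the class of functions with convex derivative, as your parenthetical remark suggests, you will have proved the general lemma anyway, and the integral representation becomes a detour. There is also a secondary flaw: your representation presupposes that $g'(0)$ and $g''(0^+)$ are finite, but for $w(t)=d+ht^\alpha$ with $\alpha\in(0,1/2]$ --- cases the theorem is explicitly meant to cover --- one has $(w^2)'(t)\to+\infty$ as $t\to 0^+$, so the representation as written does not exist; you would need to shift to $g(\cdot+\epsilon)$ and pass to the limit, or anchor the expansion at an interior point, in which case kernels of the opposite orientation $(s-x)_+$ enter and the sign bookkeeping changes. In short: the geometric reduction is right, and the reduction-to-kernels strategy is viable in principle (the kernel inequality is true), but as it stands you have reduced the theorem to an inequality you do not prove.
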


Note that the hypothesis is indeed satisfied for the weight function $w(t)=d+ht^\alpha$ for any $d,h>0$ and $\alpha\in[0,1/2]\cup\{1\}$, but not
when $\alpha\in(1/2,1)$.

\begin{proof}
Suppose that a Steiner point of degree $m+1\geq 3$ exists. We intend to show that $m+1=3$. Note that we do not only consider the case $m+1=4$,
since it is \emph{a priori} possible that degree $4$ Steiner points don't exist, although degree $5$ points exist (although we don't have any
examples, and it seems highly unlikely).

By Corollary~\ref{cor} there exist vectors $v_1,\dots,v_m$ and numbers $t_1,\dots,t_m>0$ that satisfy \eqref{a}, \eqref{b} and \eqref{c}. Square
\eqref{b}:
\begin{align}
\left(w(\sum_{i=1}^m t_i)\right)^2 &= \enorm{\sum_{i=1}^m v_i}^2 \notag\\
&= \sum_{i=1}^m\enorm{v_i}^2+2\sum_{1\leq i<j\leq m} \ipr{v_i}{v_j} \notag\\
&= \sum_{i=1}^m (w(t_i))^2 + 2\sum_{1\leq i<j\leq m} \ipr{v_i}{v_j}\quad\text{(by \eqref{a}).} \label{star}
\end{align}

Estimate $\ipr{v_i}{v_j}$ by applying \eqref{c} to $I=\{i,j\}$:
\begin{align*}
2\ipr{v_i}{v_j} &=\enorm{v_i+v_j}^2-\enorm{v_i}^2-\enorm{v_j}^2\\
&\leq w(t_i+t_j)^2-w(t_i)^2-w(t_j)^2 \quad\text{(again by \eqref{a}).}
\end{align*}
Sum this inequality over all pairs $(i,j)$ with $1\leq i<j\leq m$:
\begin{align*}
 2\sum_{1\leq i<j\leq m} \ipr{v_i}{v_j} &\leq \sum_{1\leq i<j\leq m} (w(t_i+t_j)^2-w(t_i)^2-w(t_j)^2) \\
 &= \sum_{1\leq i<j\leq m} (w(t_i+t_j))^2-(m-1)\sum_{i=1}^m(w(t_i))^2,
 \end{align*}
 since each $(w(t_i))^2$ is summed once for each of the $m-1$ pairs in which $i$ appears.
 Substitute this into \eqref{star}:
 \begin{equation}\label{A}
 \left(w(\sum_{i=1}^m t_i)\right)^2 \leq \sum_{1\leq i<j\leq m} (w(t_i+t_j))^2-(m-2)\sum_{i=1}^m(w(t_i))^2.
 \end{equation}
 Apply Lemma~\ref{ineqlemma} to $f=w^2$:
 \begin{equation}\label{B}
  \frac{(m-1)(m-2)}{2}(w(0))^2 + \sum_{1\leq i<j\leq m}(w(t_i+t_j))^2 \leq (m-2)\sum_{i=1}^m (w(t_i))^2+\left(w(\sum_{i=1}^m t_i)\right)^2.
 \end{equation}
 Combining \eqref{A} and \eqref{B}, we obtain $\frac{(m-1)(m-2)}{2}(w(0))^2\leq 0$, which implies $m+1\leq 3$.
\end{proof}

\bigskip
We now show that for each $\alpha\in(1/2,1)$ there exist Gilbert arborescences with Steiner points of degree $4$ if the weight function is
$w(t)=d+ht^\alpha$, with $d,h>0$ chosen appropriately.
In the example all incoming flows are equal. We first use Corollary~\ref{cor} to formulate a result for general weight functions.
\begin{proposition}\label{degree4proposition}
Let $w(\cdot)$ be a weight function that satisfies Conditions~\eqref{eq:cost-ftn-non-neg}--\eqref{eq:cost-ftn-concave}. There exists an MGA with
degree $4$ in Euclidean $3$-space with equal flow demands $t_1=t_2=t_3=:t$ and with weight function $w(\cdot)$ if, and only if
\begin{equation}\label{degree4condition}
 3w(t)^2+w(3t)^2 \leq 3w(2t)^2.
\end{equation}
\end{proposition}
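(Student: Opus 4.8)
The plan is to apply the Euclidean characterisation of Corollary~\ref{cor} with $m=3$: a degree-$4$ Steiner point with equal tonnages $t_1=t_2=t_3=t$ exists in some MGA if and only if there are vectors $v_1,v_2,v_3$ satisfying $\enorm{v_i}=w(t)$, satisfying $\enorm{v_1+v_2+v_3}=w(3t)$, and satisfying the pairwise collapsing conditions $\enorm{v_i+v_j}\leq w(2t)$ for all three pairs. The whole proposition then reduces to showing that such a triple exists precisely when \eqref{degree4condition} holds, and I would handle the two implications separately: the forward one by averaging, the reverse one by an explicit symmetric construction.

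For necessity ($\Rightarrow$), I would take an arbitrary admissible triple supplied by the characterisation and expand. Since $\enorm{v_i+v_j}^2=2w(t)^2+2\ipr{v_i}{v_j}$, the pairwise conditions give $2\ipr{v_i}{v_j}\leq w(2t)^2-2w(t)^2$, and summing over the three pairs yields $2\sum_{i<j}\ipr{v_i}{v_j}\leq 3w(2t)^2-6w(t)^2$. On the other hand, $w(3t)^2=\enorm{v_1+v_2+v_3}^2=3w(t)^2+2\sum_{i<j}\ipr{v_i}{v_j}$ identifies $2\sum_{i<j}\ipr{v_i}{v_j}=w(3t)^2-3w(t)^2$. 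Substituting gives $w(3t)^2-3w(t)^2\leq 3w(2t)^2-6w(t)^2$, which rearranges to \eqref{degree4condition}. This direction is short and uses no symmetry.

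For sufficiency ($\Leftarrow$), I would build a rotationally symmetric triple in $\R^3$. Fixing a common axis, set $v_i=w(t)(\sin\theta\cos\phi_i,\ \sin\theta\sin\phi_i,\ \cos\theta)$ with azimuths $\phi_i=0,\tfrac{2\pi}{3},\tfrac{4\pi}{3}$ and with $\cos\theta=w(3t)/(3w(t))$. The angle $\theta$ is well defined because subadditivity~\eqref{eq:cost-ftn-trnglr} gives $w(3t)\leq w(t)+w(2t)\leq 3w(t)$, so $\cos\theta\in[0,1]$ (using also $w(t)>0$). Condition (a) holds by construction; the horizontal components cancel, so $\enorm{v_1+v_2+v_3}=3w(t)\cos\theta=w(3t)$, which is (b). A short computation gives $\enorm{v_i+v_j}^2=w(t)^2(1+3\cos^2\theta)=w(t)^2+\tfrac13 w(3t)^2$ for every pair, so the pairwise conditions $\enorm{v_i+v_j}\leq w(2t)$ are equivalent to $w(t)^2+\tfrac13 w(3t)^2\leq w(2t)^2$, i.e. to \eqref{degree4condition}, which holds by hypothesis. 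Since (a), (b) and the three pairwise conditions are verified (the remaining subsets $\card{I}\in\{0,1,3\}$ are trivial or reduce to (a) and (b)), Corollary~\ref{cor} yields the desired degree-$4$ MGA.

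I expect the sufficiency direction to be the main obstacle: the forward implication is forced by averaging, but the converse requires actually exhibiting vectors, and it is the symmetric ansatz that makes all three pairwise norms equal and collapses them to the single scalar inequality \eqref{degree4condition}. The one point needing care is confirming that $\cos\theta=w(3t)/(3w(t))$ is a legitimate cosine, which is exactly where subadditivity of $w$ is used.
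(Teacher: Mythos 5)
Your proposal is correct and takes essentially the same approach as the paper: the necessity argument (summing the squared pairwise collapsing bounds against the identity obtained by squaring the balancing condition) is identical, and your explicit cone construction with $\cos\theta = w(3t)/(3w(t))$ is exactly the paper's choice of three unit vectors with common pairwise inner product $\lambda = \frac{w(3t)^2}{6w(t)^2}-\frac12$, just written out in coordinates. The only cosmetic difference is that you justify $w(3t)\leq 3w(t)$ via subadditivity~\eqref{eq:cost-ftn-trnglr}, whereas the paper deduces it from non-negativity and concavity; both are valid.
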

\begin{proof}
By Corollary~\ref{cor}, an MGA of degree $4$ exists in Euclidean space with equal flow demands  $t_1=t_2=t_3=:t$ if, and only if, there exist three
Euclidean vectors $v_1, v_2, v_3$ such that
\begin{gather}
\norm{v_1}_2=\norm{v_2}_2=\norm{v_3}_2=w(t),\label{aa}\\
\norm{v_1+v_2}_2,\norm{v_2+v_3}_2,\norm{v_1+v_3}_2\leq w(2t),\label{bb}\\
\norm{v_1+v_2+v_3}_2=w(3t).\label{cc}
\end{gather}
(These vectors will of course span a space of dimension at most $3$.)

Square \eqref{cc} and use \eqref{aa} to obtain an expression for the sum of the three inner products:
\begin{equation}\label{iprsum}
 2(\ipr{v_1}{v_2}+\ipr{v_2}{v_3}+\ipr{v_1}{v_3})=w(3t)^2-3w(t)^2.
\end{equation}
Square \eqref{bb} and use \eqref{aa} to obtain an upper bound on each inner product $\ipr{v_i}{v_j}$ (geometrically this is a lower bound on the
angle between any two vectors):
\begin{equation*}\label{iprineq}
 2\ipr{v_i}{v_j}\leq w(2t)^2-2w(t)^2,
\end{equation*}
and substitute this into \eqref{iprsum} to obtain \eqref{degree4condition}.

Conversely, if we assume \eqref{degree4condition}, we have to find three vectors that satisfy \eqref{aa}--\eqref{cc}. Note that for each
$\lambda\in[-1/2,1]$ there exist three unit vectors $u_1,u_2,u_3\in\R^3$ such that the inner product of each pair equals $\lambda$. The one
extreme $\lambda=1$ corresponds to three equal vectors, and the other extreme $\lambda=-1/2$ to three coplanar vectors such that any two are at
an angle of $120^\circ$. If we set
\[\lambda:=\frac{w(3t)^2}{6w(t)^2}-\frac12,\] then the vectors $v_i:=w(t)u_i$ satisfy \eqref{aa}--\eqref{cc}.
It remains to show that this value of $\lambda$ really lies in the interval $[-1/2,1]$. The lower bound $\lambda\geq -1/2$ holds trivially,
while the upper bound $\lambda\leq 1$ follows from  $0\leq w(3t)\leq 3w(t)$, which in turn follows from non-negativity
(Condition~\eqref{eq:cost-ftn-non-neg}) and concavity (Condition~\eqref{eq:cost-ftn-concave}) of the cost function.
\end{proof}

\begin{corollary}
For each $\alpha\in(1/2,1)$ there exists $d,h,t>0$ and an MGA of degree $4$ in Euclidean $3$-space with cost function $w(t)=d+ht^\alpha$, and
flow demands $t_1=t_2=t_3:=t$.
\end{corollary}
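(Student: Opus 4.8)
The plan is to invoke Proposition~\ref{degree4proposition}: it suffices to exhibit $d,h,t>0$ for which the weight function $w(x)=d+hx^\alpha$ satisfies $3w(t)^2+w(3t)^2\le 3w(2t)^2$. Since the three tonnages are to be equal, I will fix the tonnage and one cost parameter, taking $t=1$ and $h=1$ and leaving $d>0$ as the only free parameter. Using $w(1)=d+1$, $w(2)=d+2^\alpha$ and $w(3)=d+3^\alpha$, the required inequality becomes $3(d+1)^2+(d+3^\alpha)^2\le 3(d+2^\alpha)^2$.

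The key step is to test this at the boundary value $d=0$, where it collapses to $3+9^\alpha\le 3\cdot 4^\alpha$. The quantity $F(d):=3(d+2^\alpha)^2-3(d+1)^2-(d+3^\alpha)^2$ is a polynomial, hence continuous, in $d$, and $F(0)=3\cdot 4^\alpha-9^\alpha-3$. Thus, if $3+9^\alpha<3\cdot 4^\alpha$ holds strictly, then $F(0)>0$ and so $F(d)>0$ for all sufficiently small $d>0$; any such $d$ together with $h=t=1$ yields admissible parameters and, by the proposition, an MGA of degree~$4$. Everything therefore reduces to the scalar inequality $3+9^\alpha<3\cdot 4^\alpha$ for $\alpha\in(1/2,1)$.

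To prove this, I would divide by $4^\alpha$ and study $\psi(\alpha):=(9/4)^\alpha+3\cdot 4^{-\alpha}$, the goal being $\psi(\alpha)<3$. One checks directly that $\psi(1/2)=\tfrac32+\tfrac32=3$ and $\psi(1)=\tfrac94+\tfrac34=3$, so $\psi$ equals $3$ at both endpoints of $[1/2,1]$. Moreover $\psi$ is a sum of two exponentials in $\alpha$, namely $e^{\alpha\ln(9/4)}$ and $3e^{-\alpha\ln 4}$, each strictly convex, so $\psi$ itself is strictly convex. A strictly convex function that agrees at the two endpoints of an interval lies strictly below that common value throughout the interior; hence $\psi(\alpha)<3$, that is $3+9^\alpha<3\cdot 4^\alpha$, for every $\alpha\in(1/2,1)$.

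The one substantive step is this last inequality, and the convexity argument is what makes it painless: it replaces any delicate comparison of $9^\alpha$ with $4^\alpha$ by the observations that each summand of $\psi$ is an exponential and that the endpoint values coincide. (Alternatively one could verify $\psi''>0$ directly, which reduces to the elementary bound $\sqrt2\,\ln 2<\ln 3$.) The perturbation step from $d=0$ to small $d>0$ is routine, so I expect no real obstacle beyond setting up the reduction correctly.
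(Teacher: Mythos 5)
Your proof is correct and follows essentially the same route as the paper's: invoke Proposition~\ref{degree4proposition}, normalize $t=1$ and the multiplicative constant (you fix $h=1$; the paper rescales $w$ to the form $D+t^\alpha$), reduce to the scalar inequality $3+9^\alpha<3\cdot 4^\alpha$ by letting $d\to 0$ (you argue by continuity of $F$ at $d=0$, the paper by noting that the quadratic in $D$ has a positive root once its constant coefficient is negative), and prove that inequality via convexity together with the endpoint equalities at $\alpha=1/2$ and $\alpha=1$. Your one refinement is dividing by $4^\alpha$ first, so that strict convexity of $\psi(\alpha)=(9/4)^\alpha+3\cdot 4^{-\alpha}$ is immediate as a sum of exponentials, whereas the paper works with $f(\alpha)=3+3^{2\alpha}-3\cdot 2^{2\alpha}$ and asserts $f''>0$ as ``easily checked'' --- a check that in fact reduces to the bound $\sqrt{2}\,\ln 2<\ln 3$, which you correctly identify as the hidden content of that step.
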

\begin{proof}
By choosing the unit of the weight function appropriately, we may assume without loss of generality that $w(t)=D+t^\alpha$. We may similarly
assume that $t=1$, and then by Proposition~\ref{degree4proposition}, we only have to show that $3w(1)^2+w(3)^2\leq 3w(2)^2$ will hold for some
value of $D>0$, which is
\[ 3(D+1)^2+(D+3^\alpha)^2\leq 3(D+2^\alpha)^2,\]
or equivalently,
\[ D^2+(6+2\cdot 3^\alpha-6\cdot 2^\alpha)D+3+3^{2\alpha}-3\cdot2^{2\alpha}\leq 0.\]
A sufficient condition for this to hold for some $D>0$, is that the quadratic polynomial in $D$ on the left has a positive root. For this to
hold it is in turn sufficient that its constant coefficient is negative, i.e., that
\[f(\alpha)=3+3^{2\alpha}-3\cdot2^{2\alpha}< 0 \text{ for all }\alpha\in(1/2,1).\]
However, it is easily checked that $f(1/2)=f(1)=0$ and that $f''(\alpha)>0$ for all $\alpha\in(1/2,1)$, so that $f$ is convex on $(1/2,1)$. It
follows that $f$ is negative on $(1/2,1)$, which finishes the proof.
\end{proof}

\section{Conclusion}
In this paper we have studied the problem of designing a minimum cost flow network interconnecting $n$ sources and a single sink, each with
known locations and flow demands, in general finite-dimensional normed spaces. The network may contain other unprescribed nodes, known as Steiner
points. For concave increasing cost functions, a minimum cost network of this sort has a tree topology, and hence can be called a Minimum
Gilbert Arborescence (MGA). We have characterised the local topological structure of Steiner points in MGAs for linear weight functions,
specifically showing that Steiner points necessarily have degree $3$, and we have studied the degree of Steiner points in Euclidean spaces (of
arbitrary dimension) for a more general class of weight functions.

\section*{Acknowledgments} This research was supported by an ARC Linkage Grant with Newmont Australia Limited and The University of Melbourne.
Part of this research was done while Marcus Volz was a PhD student at The University of Melbourne. Much of this paper was written while Konrad
Swanepoel was visiting the Department of Mechanical Engineering of The University of Melbourne on a Tewkesbury Fellowship.

\end{document}